\newtheorem{theorem}{Theorem}[section]
\newtheorem{lemma}[theorem]{Lemma}
\newtheorem{corollary}[theorem]{Corollary}
\newtheorem{proposition}[theorem]{Proposition}
\newtheorem{definition}[theorem]{Definition}
\newcommand{\R}{\mathbb R}
\begin{document}

\title[Metrically un-knotted corank 1 singularities of surfaces in $\mathbb{R}^4$
 ]{Metrically un-knotted corank 1 singularities of surfaces in $\mathbb{R}^4$.
}

\author{L. Birbrair}

\address{Departamento of Matem\'atica, 
Universidade Federal do Cear\'a, Campus do Pici Bloco 914, CEP 60455--760, Fortaleza CE, Brazil}

\email{lev.birbrair@gmail.com}

\author[R. Mendes]{Rodrigo Mendes}
\address{Departamento de Matem\'atica, Universidade de Integra\c{c}\~ao Internacional da Lusofonia Afro-Brasileira (unilab)
, Campus dos Palmares, Cep. 62785-000. Acarape-Ce,
Brasil} \email{rodrigomendes@unilab.edu.br}

\author{J.J.~Nu\~no-Ballesteros}

\address{Departament de Matem\`atiques,
Universitat de Val\`encia, Campus de Burjassot, 46100 Burjassot,
Spain}

\email{Juan.Nuno@uv.es}

\keywords{normal embedding, link, isolated singularity.}
\subjclass[2010]{14B05; 32S50; 58K15. }

\begin{abstract}
The paper is devoted to relations between topological and metric properties of germs of real surfaces, obtained by analytic maps from $\mathbb{R}^2$ to $\mathbb{R}^4$. We show that for a big class of such surfaces the normal embedding property implies the triviality of the knot, presenting the link of the surfaces. We also present some criteria of normal embedding in terms of the polar curves.
\end{abstract}

\maketitle

\section{Introduction}
 
This paper is devoted to topological and metric properties of surfaces, defined {as the image of a polynomial map from $\mathbb{R}^2$ to $\mathbb{R}^4$ with isolated singularities}. The link of these surfaces {at a singular point} can be considered as {a knot} in $\mathbb{S}^3$. The main question is the following: determine the relation {between} the metric properties of surfaces and the topological properties of knots. Here, we make the first steps of this theory. {We recall} that the surface is called locally normally embedded {at a point} if the inner metric is locally bi-Lipschitz equivalent to the outer metric. {An interesting particular case is when the surface is a complex plane curve, that is, it is parametrized by a complex polynomial map from $\mathbb{C}$ to $\mathbb{C}^2$.} It is proved by Fernandes \cite{F} that a {complex plane curve} is locally normally embedded at a point $x_0$ if and only if $x_0$ is a smooth point. The main result of the paper is the following: {if a surface germ $(X,0)$ is normally embedded and is parametrized by a map germ $F:(\mathbb{R}^2,0)\to(\mathbb{R}^4,0)$ whose $2$-jet is equivalent to $(x,xy,0,0)$, then the link of $(X,0)$ is a trivial knot.}

The proof of the triviality of the {knot of a normally embedded surface} is based on the height-width property of the polar set of the projection of the surface to its tangent cone. This criterion was discovered in the PhD thesis (see \cite{RM}) of Rodrigo Mendes (the second author of the paper). Recently a very similar criterion was announced by Leslie Wilson and Donald O'Shea in \cite{LD}.

\bigskip

\noindent{\bf Acknowledgements} We would like to thank Alexandre Fernandes, Vincent Grandjean and Edson Sampaio for interesting discussions and important remarks.

\section{Basic definitions}
Let $X \subset \mathbb{R}^n$ be a closed and connected {semialgebraic} set. The \emph{inner metric} on $X$  is defined as follows: given two points $x_1,x_2\in X$, $d_{inner}(x_1,x_2)$  is the infimum of the lengths of rectifiable paths on $X$ connecting $x_1$ to $x_2$. {The set} $X$ is called \emph{normally embedded} if there exists $\lambda >0$ such that 
$$d_{inner}(x_1,x_2)\le \lambda \|x_1-x_2\|,$$
for all $x_1,x_2\in X$.

A semialgebraic germ $(X,x_0) \subset (\R^n,x_0)$ is called \emph{normally embedded} when there is a {normally embedded} representative $X \cap \mathcal{U}$, where $\mathcal{U}\subset \mathbb{R}^n$ is a open set containing $x_0$.
\begin{theorem} (Arc criterion \cite{MB}) \emph{{Let $(X,x_0) \subset (\R^n,x_0)$} be a semialgebraic germ. Then the following assertions are equivalent:}
\begin{itemize}
\item \emph{$(X,x_0)$ is normally embedded;}
\item \emph{There exists a constant $k>0$ such that for any pair of {arcs} $\gamma_1, \gamma_2$  parametrized by the distance to $x_0$ ($\gamma_i(0)=x_0$), we have }
\begin{center}
$d_{inner}(\gamma_1(t), \gamma_2(t))\leq k\|\gamma_1(t)-\gamma_2(t))\|$.
\end{center}
\end{itemize}
\end{theorem}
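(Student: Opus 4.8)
The plan is to prove both implications, with all the work in the converse. The forward implication is immediate: if $(X,x_0)$ is normally embedded with constant $\lambda$, then for any two arcs $\gamma_1,\gamma_2$ the defining inequality gives $d_{inner}(\gamma_1(t),\gamma_2(t))\le \lambda\|\gamma_1(t)-\gamma_2(t)\|$ verbatim, so the arc condition holds with $k=\lambda$. The content is therefore the converse, which I would establish by contraposition: assuming $(X,x_0)$ is \emph{not} normally embedded, I would produce a pair of arcs, both parametrized by distance to $x_0$, along which the ratio $d_{inner}/\|\cdot\|$ blows up.

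First I would set $K(p,q)=d_{inner}(p,q)/\|p-q\|$, a semialgebraic function on $(X\setminus\{x_0\})^2$ (semialgebraicity of the inner distance being a known fact), and observe that failure of normal embedding means precisely that $K$ is unbounded near $(x_0,x_0)$. The key reduction is to replace arbitrary pairs by pairs lying on a common sphere about $x_0$. Given a pair $(p,q)$ with, say, $\|q-x_0\|>\|p-x_0\|$, I would project $q$ radially inward to the point $q'$ on the sphere of $p$, invoking the local conic structure to guarantee a radial arc inside $X$ with $d_{inner}(q,q')\le C\,\big|\,\|q-x_0\|-\|q'-x_0\|\,\big|$. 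The elementary inequality $\|p-q\|\ge \big|\,\|p-x_0\|-\|q-x_0\|\,\big|$ then yields $d_{inner}(q,q')\le C\|p-q\|$, hence $\|p-q'\|\le (1+C)\|p-q\|$, whence $K(p,q')\ge (K(p,q)-C)/(1+C)$. Thus the restriction of $K$ to the same-sphere locus $W=\{(p,q): \|p-x_0\|=\|q-x_0\|\}$ is still unbounded near $(x_0,x_0)$.

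Next I would apply the Curve Selection Lemma to the nested semialgebraic sets $\{(p,q)\in W: K(p,q)>M\}$ to obtain a semialgebraic arc $t\mapsto(p(t),q(t))$ in $W$ with $(p(0),q(0))=(x_0,x_0)$ and $K(p(t),q(t))\to\infty$ as $t\to 0$. Because this arc lies in $W$, we have $\|p(t)-x_0\|=\|q(t)-x_0\|=:r(t)$, a single semialgebraic function with Puiseux expansion $r(t)=ct^{\alpha}(1+o(1))$, hence strictly monotone and invertible near $0$. Reparametrizing by the common distance, $\gamma_i(s):=$ the point of the respective arc at $t=r^{-1}(s)$, produces two arcs with $\|\gamma_1(s)-x_0\|=\|\gamma_2(s)-x_0\|=s$ and, since both come from the same original parameter value, $d_{inner}(\gamma_1(s),\gamma_2(s))/\|\gamma_1(s)-\gamma_2(s)\|=K(p(t),q(t))\to\infty$. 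This contradicts the arc condition for every $k$, completing the contrapositive.

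I expect the main obstacle to be the same-sphere reduction of the second paragraph: it is exactly what makes the final reparametrization legitimate, since two arcs selected in $X\times X$ without this constraint generally meet their respective distance-spheres at mismatched parameter values, and the blow-up of $K$ need not survive such a mismatch. Making the radial-projection estimate rigorous requires the uniform local conic structure of semialgebraic germs, in the form of a controlled radial contraction whose inner length is comparable to the change in radius; this, together with the semialgebraicity of $d_{inner}$, should be the technical heart of the argument.
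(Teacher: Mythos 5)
The paper does not actually prove this theorem: it is imported verbatim from \cite{MB}, so there is no in-paper proof to compare against. Judged on its own terms, your outline has the right overall shape (trivial forward direction; contrapositive via curve selection for the converse), but it contains two genuine gaps. First, $d_{inner}$ is in general \emph{not} a semialgebraic function --- arc length is an integral of a semialgebraic function and typically leaves the semialgebraic category --- so you cannot apply the Curve Selection Lemma to $\{K>M\}$ as written. The standard repair, which the paper itself recalls in Definition 2.2, is to replace $d_{inner}$ by the Kurdyka--Orro/Birbrair--Mostowski pancake metric $d_P$, which \emph{is} semialgebraic and bi-Lipschitz equivalent to $d_{inner}$; boundedness of the ratio is unaffected, so the rest of the argument survives, but this substitution is not optional.

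The second and more serious gap is the same-sphere reduction, which you correctly identify as the technical heart but do not actually establish. The estimate $d_{inner}(q,q')\le C\bigl(\|q-x_0\|-\|q'-x_0\|\bigr)$ with a constant $C$ \emph{uniform over all points $q$ near $x_0$} does not follow from the local conic structure theorem, which is purely topological and carries no Lipschitz information about the radial collapse. Such a uniform radial contraction is a nontrivial metric statement requiring, e.g., an $L$-regular (pancake-type) decomposition compatible with the distance function. A cleaner route that avoids the uniform lemma entirely is to reverse the order of operations: apply arc selection first to obtain a single semialgebraic arc $t\mapsto(p(t),q(t))$ along which $d_P/\|\cdot\|$ blows up, and only then reparametrize each branch by its own distance to $x_0$; along a fixed semialgebraic arc the needed length estimate is immediate from the Puiseux expansion. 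The price is that $\gamma_1(s)$ and $\gamma_2(s)$ then come from mismatched parameter values, so you must add an order-of-contact (non-archimedean) comparison showing the blow-up of the ratio survives the mismatch. Either that comparison or a proof of the uniform contraction lemma must be supplied; as it stands, the key step is asserted rather than proved.
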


{Now we recall other definitions we need in the next sections.}

\begin{definition} 
{\rm Let $\gamma_1$ and $\gamma_2$ be germs of semialgebraic {arcs} at $x_0 \in \mathbb{R}^n$. {Assume} that the two arcs are parametrized by the distance to$x_0$. The function ${d_{out}}(\gamma_1(t),\gamma_2(t))=\|\gamma_1(t)-\gamma_2(t)\|$ admits a Newton-Puiseux expansion and the {smallest} exponent is called the (extrinsic) \emph{order of contact} of $\gamma_1$ and $\gamma_2$ {and is denoted by $tord(\gamma_1,\gamma_2)$.}

By the results of Kurdyka and Orro or Birbrair and Mostowski (see \cite{K} and \cite{BM}) there exists a semialgebraic metric $d_P$ bi-Lipschitz equivalent to the intrinsic metric $d_{inner}$. That is why we can {also} define the (intrinsic) \emph{order of contact} of $\gamma_1$ and $\gamma_2$ as the order of the function $d_{inner}(\gamma_1(t),\gamma_2(t))$, {which is denoted  by $tord_{inn}(\gamma_1,\gamma_2)$.}}
\end{definition}

 If $X$ is normally embedded at $x_0$, then: 
\begin{center}
$tord(\gamma_1,\gamma_2)=ord_t\|\gamma_1(t)-\gamma_2(t)\|=ord_td_{inner}(\gamma_1(t),\gamma_2(t))=tord_{inn}(\gamma_1,\gamma_2)$,
\end{center}
for any pair of arcs $\gamma_1, \gamma_2$ in $X$ with $\gamma_i(0)=x_0$.

\begin{definition} 
\emph{A subset $X \subset \mathbb{R}^n$ is called a $\beta$-\emph{H\"older triangle} with vertex $x_0 \in X$ if the germ $(X,x_0)$ is bi-Lipschitz equivalent with respect to the outer metric to the germ $(T_{\beta},0)$, where}
\begin{center}
$T_{\beta}=\{(x,y) \in \R^2; 0 \leq y \leq x^\beta, 0 \leq x \leq 1 \}$,
\end{center}
{and} $\beta \in \mathbb{Q} \cap [1,\infty)$.
\end{definition}

\newpage

\begin{definition}
\emph{{Let $X\subset \R^n$ be a semialgebraic set.} The \emph{tangent cone} $T_{x_0}X$ of $X$ at $x_0$ is the closed cone over the subset $S_{x_0}X$,  the \emph{tangent link}, defined as}
\begin{center}
$S_{x_0}X=\{v \in \mathbb{R}^n; v=\lim_{y_j \to x_0}\frac{y_j-x_0}{\|y_j-x_0\|}\},$
\end{center}
\emph{where $y_j \in X-\{x_0\}$ is a sequence converging to $x_0$. } 
\end{definition}

Notice that the tangent cone can be also defined as the set of tangent vectors to all the semialgebraic arcs passing through $x_0$.

\begin{definition} 
{\rm Let $F:(\mathbb{R}^n,0) \rightarrow (\mathbb{R}^p,0)$ be a polynomial map germ ($n\leq p$). 

\begin{itemize}
\item {The \emph{corank} of $F$ is the dimension of the kernel of the differential $DF(0)$.}

\item For each variable $x_i$, consider the function $g(x_i)=F(0,\ldots,0,x_i,0,\ldots,0)$. {If $g\ne0$}, we have
\begin{center}
$g(x_i)=x_i^m\tilde{g}(x_i),$
\end{center}
for some $m \in \mathbb{N}$ where $\tilde{g}(0)\neq 0$. Then $m$ is called the \emph{order} of $F$ with respect to the variable $x_i$ and is denoted by $m=ord_{x_i}F(0,\ldots,0,x_i,0,\ldots,0)$. {If $g=0$, then we set $ord_{x_i}F(0,\ldots,0,x_i,0,\ldots,0)=\infty$.}

\end{itemize}}

\end{definition}

\section{Tangent Cone and corank 1 singularities}

\begin{theorem}
\emph{Let $F:(\mathbb{R}^2,0)\rightarrow (\mathbb{R}^4,0)$ be an injective {polynomial map germ} of corank 1 and {let $(X,0)$ be its image}. Then, the tangent cone of $X$ at $0 \in \mathbb{R}^4$ is {either} a two-dimensional plane $V \subset \R^4$ or a half-plane}.
\end{theorem}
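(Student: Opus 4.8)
The plan is to first put $F$ into a convenient normal form and then read the tangent cone off the Newton--Puiseux expansions of the parametrization along arcs. Since $F$ has corank $1$, the differential $DF(0)$ has rank $1$; after a linear change of coordinates in the source sending $\ker DF(0)$ to the $y$-axis and a linear change in the target sending the image of $DF(0)$ onto $\R e_1$, we may assume $F=(x+\text{h.o.t.},\phi_2,\phi_3,\phi_4)$ with $\mathrm{ord}\,\phi_i\ge 2$. The map $(x,y)\mapsto(F_1(x,y),y)$ has invertible differential at the origin, hence is a semialgebraic local diffeomorphism of the source; composing $F$ with its inverse leaves $X$, its tangent cone, the injectivity and the corank unchanged, and yields $F(x,y)=(x,\phi_2(x,y),\phi_3(x,y),\phi_4(x,y))$ with $\mathrm{ord}\,\phi_i\ge 2$. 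I will work with this form throughout.

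Second, I would justify that $T_0X$ can be computed from arcs through the origin in the source. Because $F$ is injective and polynomial, a representative on a compact neighbourhood of $0$ is a continuous bijection onto its image, hence a homeomorphism with semialgebraic inverse; consequently points of $X$ close to $0$ have source preimages close to $0$, and every semialgebraic arc in $X$ based at $0$ has the form $F\circ\gamma$ for a source arc $\gamma$ with $\gamma(0)=0$. Thus $T_0X$ is exactly the set of limits $\lim_{t\to 0^+}F(\gamma(t))/\|F(\gamma(t))\|$. I would stress that injectivity is genuinely used here: for a non-injective map such as $(x,xy,0,0)$ the fibre $F(0,y)\equiv 0$ breaks this correspondence and the conclusion fails.

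Third comes the order computation. Set $k=\min_i\mathrm{ord}_y\phi_i(0,y)$; by injectivity $F(0,y)\not\equiv 0$, so $2\le k<\infty$, and let $w$ be the coefficient vector of $y^{k}$ in $y\mapsto(0,\phi_2(0,y),\phi_3(0,y),\phi_4(0,y))$, which is nonzero and orthogonal to $e_1$. For an arc with $x(t)\sim\alpha t^{a}$ and $y(t)\sim\beta t^{b}$, writing $\phi_i(x,y)=\phi_i(0,y)+xR_i(x,y)$ with $\mathrm{ord}\,R_i\ge 1$ shows that the first coordinate of $F(\gamma)$ has order $a$, that the cross-terms $xR_i$ contribute only at order $\ge a+b$, and that the coefficient of $t^{kb}$ coming from the $\phi_i(0,y(t))$ is $\beta^{k}w$. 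Comparing $a$ with $kb$ gives three regimes: if $a<kb$ then coordinates two to four all have order strictly larger than $a$ (since $kb>a$ and $a+b>a$), so the first coordinate dominates and the direction is $\pm e_1$; if $a=kb$ the leading part at order $t^{a}$ is $\alpha e_1+\beta^{k}w$; and if $a>kb$ the leading part at order $t^{kb}$ is $\beta^{k}w$, that is $\pm w$. Since $\alpha,\beta$ range over all of $\R$ and $\beta^{k}$ attains every real value when $k$ is odd but only nonnegative values when $k$ is even, the set of all such directions is the full plane $\mathrm{span}(e_1,w)$ when $k$ is odd and the half-plane $\{s e_1+r w:\ s\in\R,\ r\ge 0\}$ when $k$ is even. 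In either case $T_0X$ is a plane or a half-plane, as claimed.

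The step I expect to be the main obstacle is twofold. The genuinely delicate point is the reduction of the second paragraph: one must argue carefully that no point of $X$ approaching the origin comes from a source point bounded away from the origin, and this is exactly where injectivity (properness of the parametrization) is indispensable. The remaining difficulty is the Newton--Puiseux bookkeeping of the third paragraph, where one has to verify that the cross-terms $xR_i$ never supply a new leading direction and that precisely the components recorded by $w$ survive at the minimal order; once this is controlled, the parity of $k$ cleanly separates the plane case from the half-plane case.
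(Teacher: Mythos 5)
Your proposal is correct and follows essentially the same route as the paper: reduce to the normal form $F=(x,\phi_2,\phi_3,\phi_4)$, compute tangent directions along Puiseux arcs in the source, and observe that the parity of $k=\mathrm{ord}_y F(0,y)$ decides between the full plane $\mathrm{span}(e_1,w)$ and a half-plane. You are in fact somewhat more careful than the paper on two points it leaves implicit, namely the use of injectivity to lift arcs of $X$ to arcs in the source and the check that the cross-terms $xR_i$ never supply the leading direction.
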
 
\begin{proof}
{After an analytic coordinate change in $(\mathbb{R}^2,0)$ and  a linear coordinate change in $\mathbb{R}^4$}, the map $F$ can be presented in the following way:
\begin{equation}
F(x,y)=(x,F_2(x,y),F_3(x,y),F_4(x,y)), 
\end{equation} 
where $ord_y(F_2(0,y))<ord_y(F_3(0,y))\leq ord_y(F_4(0,y))$ and $F_2(x,0)=F_3(x,0)=F_4(x,0)=0$. 

Notice that the tangent cone can be considered as the set of all the tangent vectors of all analytic arcs $\gamma \subset X$, such that $\gamma(0)=0$. Any analytic arc $\gamma$ of this form has a pull-back $\tilde{\gamma}\subset \mathbb{R}^2$, such that $\tilde{\gamma}(0)=0$. These arcs can be presented in the form $(x,f(x))$ or $(g(y),y)$, where $f$ and $g$ are some subanalytic functions, such that $ord_x(f)\geq 1$ and $ord_y(g)\geq 1$. The tangent vector of $\gamma$ is determined by the smallest Puiseux exponent. {Let $F_2(0,y)=a y^p+o(y^p)$ where $a\ne0$ and $o(y^p)$ means terms of order $>p$} . Consider the arcs of the form $(by^p,y)=\tilde{\gamma}(y)$, $b \in \mathbb{R} \setminus \{0\}$. We have 
$$
F(by^p,y)=(by^p,ay^p+o(y^p),o(y^p),o(y^p)),
$$

Computing the tangent vector, we obtain $(b,a,0,0)$. If $p$ is even, then the tangent vector of these arcs generate a half-plane and if $p$ is odd, then they generate a two dimensional plane. The tangent vectors to the other arcs are parallel to the $x$-axis or the $y$-axis. Namely, if $\tilde{\gamma}(x)=(x,cx^q)$, then $F(\tilde{\gamma}(x))=(x,o(x^q),o(x^q),o(x^q))$ and the tangent vector is equal to $(1,0,0,0)$. {On} the other hand, if $\tilde{\gamma}(y)=(by^q,y)$ and $q\neq p$, we obtain the tangent vector $({b},0,0,0)$ if $q<p$ or $(0,{a},0,0)$ if $q>p$.
\end{proof}
\begin{definition}
\emph{For a surface $X \subset \mathbb{R}^4$ (i.e. closed, connected 2-dimensional semialgebraic set), such that the tangent cone $T_0X$ is contained in a plane $\mathbb{R}^2_{(T)}$, one can define the polar set of $X$ and the discriminant set of $X$ as follows:  The polar set of $X$ is the set of the points $x \in X$, such that the derivative $dP|_X$ is not an isomorphism, where $P:X \rightarrow \mathbb{R}^2_{(T)}$ is the orthogonal projection. The discriminant set is defined as the image of the polar set by this projection. Notice that, in {the semialgebraic case}, this set is one-dimensional}.
\end{definition}

\begin{theorem}
\emph{If the tangent cone of $X$ at $x_0$ {is} a half-plane, then the germ {$(X,x_0)$} is not normally embedded}.
\end{theorem}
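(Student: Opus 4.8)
The plan is to apply the Arc Criterion (Theorem 2.1): I will exhibit a pair of arcs $\gamma_1,\gamma_2\subset X$ emanating from $0$ for which the ratio $d_{inner}(\gamma_1(t),\gamma_2(t))/\|\gamma_1(t)-\gamma_2(t)\|$ is unbounded as $t\to 0^+$. First I would put $F$ in the normal form of Theorem 3.1, so that $F(x,y)=(x,F_2,F_3,F_4)$ with $F_i(x,0)=0$ and $F_2(0,y)=ay^p+o(y^p)$, $a\ne 0$, where $ord_y F_2(0,y)=p<ord_y F_3(0,y)$. As the proof of Theorem 3.1 shows, the tangent cone is a half-plane exactly when $p$ is \emph{even}, which I now assume. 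The test arcs are the images of the two half-branches of the $y$-axis,
\[
\gamma_1(t)=F(0,t),\qquad \gamma_2(t)=F(0,-t),\qquad t>0 .
\]
Both start at $0$, and since $\|F(0,\pm t)\|\sim |a|\,t^{p}$, parametrizing each arc by the distance to $0$ amounts, up to the asymptotic reparametrization $t\sim(\tau/|a|)^{1/p}$, to comparing orders in the variable $t$; I will therefore work directly with $t$.

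For the outer estimate I observe that each coordinate of $\gamma_1(t)-\gamma_2(t)$ is twice the odd part of the corresponding component of $F(0,\cdot)$. In the second coordinate the leading monomial $ay^{p}$ is even and hence cancels in the difference, so $F_2(0,t)-F_2(0,-t)$ has order $>p$; the third and fourth coordinates already have order $\geq ord_y F_3(0,y)>p$. Consequently $\|\gamma_1(t)-\gamma_2(t)\|$ has some order $\delta>p$, i.e. it is $o(t^{p})$. This is the decisive use of the evenness of $p$: it forces the dominant term to disappear from the outer distance.

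For the inner estimate I only need a lower bound of order $p$. Because $F$ is injective (and, on a small enough representative, proper, hence a homeomorphism onto $X$), every rectifiable path $\sigma\subset X$ from $\gamma_1(t)$ to $\gamma_2(t)$ lifts to a continuous path in the source joining $(0,t)$ to $(0,-t)$. By the intermediate value theorem this lift meets the line $\{y=0\}$ at a point $(x_\ast,0)$, whose image $F(x_\ast,0)=(x_\ast,0,0,0)$ has vanishing second coordinate. Writing $\pi_2$ for the projection onto the second coordinate, which is $1$-Lipschitz, the length of $\sigma$ is at least the total variation of $\pi_2\circ\sigma$, so
\[
\mathrm{length}(\sigma)\ \geq\ |\pi_2(\gamma_1(t))|\ =\ |F_2(0,t)|\ =\ |a|\,t^{p}(1+o(1)).
\]
Taking the infimum over all such $\sigma$ gives $d_{inner}(\gamma_1(t),\gamma_2(t))\geq c\,t^{p}$; travelling along the image of the $y$-axis through the origin shows the order is in fact exactly $p$. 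Combining the two estimates, the ratio is bounded below by a constant times $t^{\,p-\delta}\to\infty$, so by the Arc Criterion $(X,0)$ is not normally embedded.

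The main obstacle is the lower bound for $d_{inner}$, and specifically the justification that every competitor path must cross $\{y=0\}$: this rests on $F$ being a homeomorphism onto its image so that paths lift, which I would obtain from injectivity together with the properness of a sufficiently small representative. The remaining ingredients — the cancellation of the even leading term in the outer distance and the variation estimate for $\pi_2$ — are routine once the lift is available.
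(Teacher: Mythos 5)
Your proposal is correct, but it follows a genuinely different route from the paper. The paper's proof is parametrization-free: it projects $X$ orthogonally onto the plane containing the tangent cone, observes that this projection has degree zero because it is not surjective (the image clusters in a half-plane), and deduces that a generic line $l$ not tangent to the discriminant has at least two preimage arcs $\gamma_1,\gamma_2$; these have inner order of contact $1$ (they are separated by the polar locus) but outer order of contact $>1$ (both are tangent to $l$), contradicting the arc criterion. You instead exploit the normal form of Theorem 3.1 and choose the explicit pair of arcs $F(0,t)$, $F(0,-t)$: the evenness of $p$ makes the leading term $ay^{p}$ cancel in the outer distance, while the lifting/intermediate-value/projection argument gives the lower bound of order $p$ for the inner distance. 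Your version buys explicitness and avoids the genericity and degree-theoretic considerations (and in fact you could bypass the arc criterion entirely, since the pair of points $F(0,t)$, $F(0,-t)$ directly violates the Lipschitz inequality in the definition of normal embedding); the cost is generality: your argument applies only to surfaces given as images of injective corank~$1$ map germs in the normal form of Theorem 3.1, whereas the paper's statement and proof apply to an arbitrary semialgebraic surface whose tangent cone is a half-plane. For the use made of the theorem in the paper (Corollary 3.5 and the main result) your restricted version suffices. The one step you rightly flag --- that every rectifiable path in $X$ between $F(0,t)$ and $F(0,-t)$ lifts to the source and hence crosses $\{y=0\}$ --- is sound: an injective continuous map on a small closed ball is a homeomorphism onto its image, and the germ of the image is represented by such images, so the lift exists; you should also note that the small discrepancy between the parameter $t$ and the distance-to-origin parametrization only perturbs the relevant orders by higher-order terms, which you implicitly do.
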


\begin{proof}

Consider the orthogonal projection $P:X \rightarrow \mathbb{R}^2_{(T)}$, where $\mathbb{R}^2_{(T)}$ is the plane containing the tangent cone $T_{x_0}X$. {The}  inverse image of any point by the map $P$ is a finite set (otherwise the tangent cone cannot {be contained in} this plane). Then one can define the degree of this map. {But} the degree is equal to zero, because the map is not surjective. Consider the polar set of $X$ {and let $l$ be a line in $\mathbb{R}^2_{(T)}$} which is not tangent to any component of the discriminant set. Since the degree of the map is equal to zero, then the germ of line $l$ has at least two {inverse image arcs} $\gamma_1$ {and} $\gamma_2$. We parametrize these arcs by the distance to the origin, {so that} $\|\gamma_i(t)\|=t$. Since the line $l$ is not tangent to the components of the discriminant set one has $tord_{inn}(\gamma_1,\gamma_2)=1$. {On} the other hand, the arcs $\gamma_1$ and $\gamma_2$ are tangent to the line $l$, {hence}
\begin{center}
$tord(\gamma_1,\gamma_2)>1$.
\end{center}
It means that  the set $X$ is not normally embedded at $x_0$ by the arc criterion. 
\end{proof}

\begin{corollary}
\emph{Let {$(X,0)$ be a surface parametrized as the image of a corank 1 map germ $F:(\mathbb{R}^2,0)\to(\mathbb{R}^4,0)$ such that} $ord_yF(0,y)$ is even. Then, $(X,0)$ is not normally embedded}.
\end{corollary}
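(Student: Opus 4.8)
The plan is to deduce the corollary directly from the two preceding theorems, so that the only real content is to link the hypothesis on $ord_yF(0,y)$ with the half-plane alternative of Theorem 3.1. First I would choose coordinates in the source so that the $y$-axis spans the kernel of $DF(0)$, and then put $F$ into the normal form used in the proof of Theorem 3.1,
\[
F(x,y)=(x,F_2(x,y),F_3(x,y),F_4(x,y)),
\]
with $ord_yF_2(0,y)<ord_yF_3(0,y)\le ord_yF_4(0,y)$ and $F_i(x,0)=0$. Setting $p:=ord_yF_2(0,y)$, the restriction $F(0,y)=(0,F_2(0,y),F_3(0,y),F_4(0,y))$ has lowest-order component of order $p$, whence $ord_yF(0,y)=p$.

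Next I would argue that the parity of $p$ is precisely the invariant controlling the dichotomy of Theorem 3.1. Because $ord_yF(0,y)$ equals the order of the scalar function $\|F(0,y)\|$ along the kernel line, it is unaffected by the linear change in the target and by any regular reparametrization of the kernel line; thus the hypothesis that $ord_yF(0,y)$ is even is coordinate-free and equivalent to $p$ being even. I would then appeal to the tangent-vector computation inside the proof of Theorem 3.1: the arcs $(by^p,y)$ yield tangent directions proportional to $(b,a,0,0)$ with sign $\mathrm{sgn}(y^p)$. When $p$ is even this sign is constant, so as $b$ ranges over $\mathbb{R}$ the tangent link is a half-circle and the tangent cone $T_0X$ is a half-plane (for $p$ odd one would instead sweep out a full circle, giving a plane).

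Finally I would invoke Theorem 3.3, which asserts that a surface whose tangent cone at the singular point is a half-plane fails to be normally embedded there. Chaining the implications, $p$ even $\Rightarrow$ $T_0X$ a half-plane $\Rightarrow$ $(X,0)$ not normally embedded, yields the statement. The principal obstacle lies entirely in the first two paragraphs: one must verify that ``even order along the kernel direction'' is genuinely intrinsic and that it coincides with the parity of the exponent $p$ occurring in the normal form. Once this identification is secured, the corollary follows at once from Theorems 3.1 and 3.3 with no additional computation.
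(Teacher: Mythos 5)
Your argument is correct and is essentially the one the paper intends: the corollary is stated without proof precisely because it is the concatenation of the tangent-vector computation in the proof of Theorem 3.1 (the arcs $(by^p,y)$ with $p=ord_yF(0,y)$ even sweep out only a half-plane) with Theorem 3.3 (half-plane tangent cone implies not normally embedded). Your additional remark on the coordinate-invariance of the parity of $ord_yF(0,y)$ is a point the paper leaves implicit, but it does not change the route.
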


\section{Height and width of surfaces in $\mathbb{R}^4$}
Let $(X,0)$ be a germ of a surface such that $T_0X$ is a subset of a 2-dimensional plane $\mathbb{R}^2_{(T)}$. Let us consider  $P:X \rightarrow \mathbb{R}^2_{(T)}$ {the} orthogonal projection. 

\begin{proposition}
\emph{ {Assume $(X,0)$ is normally embedded}. A subset $T'\subsetneq T_0X$ such that $\forall v \in T'$ one has $\#(X \cap P^{-1}(v))=1$ has the following structure:
\begin{center}
$T'=V-\cup T_{\beta_i}$,
\end{center}
where $T_{\beta_i}$ are H\"older triangles with $\beta_i>1$}.
\end{proposition}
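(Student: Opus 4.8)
The plan is to first reduce to the case that the tangent cone is the full plane. Since $(X,0)$ is assumed normally embedded, the previous theorem forbids $T_0X$ from being a half-plane, so $T_0X=V=\mathbb{R}^2_{(T)}$ is the entire two-dimensional plane. I would then consider the discriminant $\Delta\subset V$ of the projection $P$, which by the definition preceding the statement is a one-dimensional semialgebraic set, that is, a finite union of semialgebraic arcs issuing from $0$. The complement $V\setminus\Delta$ falls into finitely many connected semialgebraic regions $R_1,\dots,R_m$, and on each $R_j$ the integer $v\mapsto\#(X\cap P^{-1}(v))$ is constant, since the fibre count can only jump across the critical values, i.e. across $\Delta$. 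A standard fact about germs of two-dimensional semialgebraic sets is that each such region, being bounded by two consecutive discriminant arcs, is bi-Lipschitz equivalent with respect to the outer metric to a H\"older triangle $T_{\beta_j}$ for some $\beta_j\in\mathbb{Q}\cap[1,\infty)$; I would invoke this to put every region into H\"older-triangle form.

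The heart of the argument is then the claim that \emph{every ``fat'' region} (one with $\beta_j=1$) \emph{carries fibre count exactly $1$}. I would prove the two inequalities separately. For count $\ge1$: a fat region contains an open angular sector, so its set of directions has nonempty interior; every direction $w$ in that interior lies in $V=T_0X$, hence is the tangent direction of some arc $\sigma\subset X$, and for small $t$ the projected arc $P(\sigma(t))$ lies inside the open region. Thus $P(X)$ meets $R_j$, and since the count is constant on $R_j$ it is $\ge1$ there. For count $\le1$: suppose some fat region $R_j$ had two sheets. I would pick an arc $\delta$ with $\|\delta(t)\|=t$ running into the interior of $R_j$ transversally to $\Delta$, and lift it to two distinct arcs $\gamma_1,\gamma_2\subset X$ with $P(\gamma_i)=\delta$. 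Since $X$ is tangent to $V$, the two lifts differ only in the coordinates normal to $V$, which are $o(t)$; hence $tord(\gamma_1,\gamma_2)>1$. This is exactly the mechanism used in the proof that a half-plane tangent cone obstructs normal embedding, now deployed to produce a contradiction.

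The hard part will be the matching inner estimate $tord_{inn}(\gamma_1,\gamma_2)=1$. Here I would argue that any path in $X$ joining $\gamma_1(t)$ to $\gamma_2(t)$ must pass from one sheet to the other, hence must meet the polar set, so its image under the $1$-Lipschitz map $P$ reaches the discriminant $\partial R_j$ and returns; therefore $d_{inner}(\gamma_1(t),\gamma_2(t))\ge 2\,\mathrm{dist}\!\big(\delta(t),\partial R_j\big)$. In a fat region this distance is linear in $t$, so $d_{inner}\gtrsim t$, giving $tord_{inn}=1$. Combined with $tord(\gamma_1,\gamma_2)>1$ this contradicts the arc criterion (Theorem 2.1) for a normally embedded germ; controlling this lower bound for the inner distance, i.e. verifying that switching sheets genuinely costs the distance to the polar locus, is the step that needs the most care. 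With the claim in hand the proposition follows: the count is $1$ on every fat region, so the set $V\setminus T'=\{v:\#(X\cap P^{-1}(v))\neq1\}$ can meet only the cuspidal regions, each of which is a H\"older triangle $T_{\beta_i}$ with $\beta_i>1$; writing $T'=V-\bigcup_i T_{\beta_i}$ yields the stated structure, the inclusion being proper because the polar set is nonempty at the corank $1$ singularity, forcing at least one nontrivial cuspidal region.
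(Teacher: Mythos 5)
Your proposal follows essentially the same route as the paper's proof: decompose the complement of the discriminant into H\"older triangles and rule out a ``fat'' region carrying two sheets by noting that the two lifts of a transversal arc have outer contact order $>1$ (tangency of $X$ to the plane $V$) while their inner contact order is $1$, since any connecting path must cross the preimage of the region's boundary --- contradicting the arc criterion. Your extra checks (that the fibre count is at least $1$ on fat regions, and the closing remark on properness of the inclusion) are harmless additions that do not change the argument.
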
 
\begin{proof}
Notice that the discriminant set of the projection is a union of semi-analytic arcs and the function $\#(X \cap P^{-1}(v))$ is locally constant on the complement of the discriminant set.  Now, we must prove that the zones where $\#(P^{-1}(v) \cap X)>1$ are "$thin$" (i.e are H\"older triangles with exponent bigger than one). Suppose {this is not true}. It means that there is a triangle $T_i$, bounded by two arcs $\gamma_1$ and $\gamma_2$ such that $\gamma_1$ and $\gamma_2$ are not tangent at zero and $\#(P^{-1}(v)\cap X)>1$ on $T_i$. Take an arc $\beta \subset T_i$, passing through zero and not tangent neither to $\gamma_1$, nor to $\gamma_2$ such that {$P ^{-1}(\beta) \cap X$ contains two different arcs $\tilde{\beta_1}, \tilde{\beta_2}$}.  Then $\tilde{\beta_1}, \tilde{\beta_2}$ and $\beta$ are tangent at zero. Let us parametrize $\tilde{\beta_1}$ and $\tilde{\beta_2}$ by the distance to zero (i.e., $\|\tilde{\beta_i}(t)\|=t$). Then {$tord_{inn}(\tilde{\beta}_1,\tilde{\beta}_2)=1$} because any path, connecting these two point must go through $P^{-1}(\gamma_1)$ or trough $P^{-1}(\gamma_2)$. {But this is not possible if $(X,0)$ is normally embedded.} 
\end{proof}

Now, let $X$ be any surface {whose tangent cone is} a plane at $x_0 \in X$.

Let $T_i$ be a H\"older triangle whose boundary is contained in the discriminant set and $\#(P^{-1}(v)\cap X)>1$ on $T_i$. The triangles $T_i$, bounded by the discriminant set and such that for each $v \in T_i$ one has $\#(P^{-1}(v)\cap X)>1$ are called the \emph{polar triangles}. Let us associate two rational numbers to the triangle $T_i$. The first number  associated to $T_i$ is called the \emph{width} of $T_i$, and is defined as the order of contact of the boundary arcs $\gamma_1$ and $\gamma_2$, i.e., width$(T_i)=tord(\gamma_1,\gamma_2)$.  Another rational number is called the \emph{height} of $T_i$ and is defined as
\begin{center}
height$(T_i)=min \{ tord(\tilde{\beta}_1(t),\tilde{\beta}_2(t)):\; \tilde{\beta_1},\tilde{\beta_2} \subset P^{-1}(\beta)\}$.
\end{center}
where $\tilde{\beta_1}, \tilde{\beta_2}$ are semi-arcs in $X$, $\beta \subset T_i$.

\begin{theorem}
\emph{If $(X,x_0)$ is normally embedded and the set of polar triangles is non-empty, then for all $T_i$, one has }
\begin{center}
$width(T_i) \geq height(T_i)$.
\end{center}

\end{theorem}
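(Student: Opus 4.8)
The plan is to estimate the inner order of contact of the two sheets of $X$ lying over a central arc of $T_i$, bound it by the width, and then use normal embedding to pass to the extrinsic order of contact that defines the height. Throughout I may assume that $T_0X$ is the full plane $\mathbb{R}^2_{(T)}$: if it were a half-plane, then $(X,0)$ would fail to be normally embedded by the theorem on half-plane tangent cones, contrary to hypothesis.

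First I would fix a polar triangle $T_i$ bounded by the discriminant arcs $\gamma_1,\gamma_2$, so that $w:=\mathrm{width}(T_i)=tord(\gamma_1,\gamma_2)$. Since the function $\#(P^{-1}(v)\cap X)$ is constant and larger than $1$ on the interior $T_i^\circ$, the restriction of $P$ to $P^{-1}(T_i^\circ)\cap X$ is a covering over the simply connected set $T_i^\circ$, hence a trivial one; its sheets meet only along the fold $P^{-1}(\gamma_1\cup\gamma_2)$. Consequently, every path in $X$ that joins two points lying over $T_i^\circ$ on distinct sheets must reach this fold, and therefore its image under the orthogonal (hence $1$-Lipschitz) projection $P$ must reach $\gamma_1\cup\gamma_2$.

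Next I would take $\beta\subset T_i$ to be a central arc of the H\"older triangle and choose two lifts $\tilde\beta_1,\tilde\beta_2\subset P^{-1}(\beta)\cap X$ on distinct sheets, parametrized by distance to the origin. For any path in $X$ from $\tilde\beta_1(\tau)$ to $\tilde\beta_2(\tau)$, the argument above shows that its projection is a plane path issuing from $\beta(t)$ and reaching $\gamma_1\cup\gamma_2$, where $t\approx\tau$ because $X$ is tangent to $\mathbb{R}^2_{(T)}$. Hence the length of the path is at least $d_{out}(\beta(t),\gamma_1\cup\gamma_2)$, and using the model of the H\"older triangle one checks that the nearest boundary points occur at comparable level, so this distance is of order $t^{w}$. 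This yields $d_{inner}(\tilde\beta_1(\tau),\tilde\beta_2(\tau))\gtrsim \tau^{w}$, and therefore $tord_{inn}(\tilde\beta_1,\tilde\beta_2)\le w$.

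Finally, since $(X,0)$ is normally embedded, the inner and outer orders of contact of any pair of arcs in $X$ agree, so $tord(\tilde\beta_1,\tilde\beta_2)=tord_{inn}(\tilde\beta_1,\tilde\beta_2)\le w$. As $\mathrm{height}(T_i)$ is the minimum of $tord$ taken over all admissible arcs $\beta$ and their lifts, this central choice already gives $\mathrm{height}(T_i)\le w=\mathrm{width}(T_i)$. The main obstacle I expect is making the lower bound $d_{inner}(\tilde\beta_1(\tau),\tilde\beta_2(\tau))\gtrsim\tau^{w}$ fully rigorous: one must confirm that switching sheets genuinely forces a path to reach the fold, ruling out shortcuts through other sheets via the triviality of the covering over $T_i^\circ$, and one must establish the geometric estimate $d_{out}(\beta(t),\gamma_1\cup\gamma_2)\sim t^{w}$ together with the comparison $t\approx\tau$ coming from the tangency of $X$ to the plane.
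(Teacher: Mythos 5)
Your overall strategy is the same as the paper's: bound the inner distance between two lifts of a central arc of the polar triangle from below by the distance to the fold locus, which is of order $t^{width(T_i)}$ because the $1$-Lipschitz projection $P$ forces any sheet-switching path to project onto a plane path reaching the discriminant; then use normal embedding to convert this inner estimate into an estimate on $tord$, hence on the height. The paper phrases this as a contradiction and you argue directly, but that difference is immaterial; the conic structure/segment bookkeeping in the paper is just a way of organizing the same fold-crossing argument.

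The genuine gap is your assertion that $\#(P^{-1}(v)\cap X)$ is constant on $T_i^{\circ}$ and that $P$ restricts to a trivial covering there, with sheets meeting only over $\gamma_1\cup\gamma_2$. The definition of a polar triangle only requires that its boundary lie in the discriminant set and that $\#(P^{-1}(v)\cap X)>1$ throughout $T_i$; it does not exclude further discriminant arcs in the interior of $T_i$, across which the number of preimages jumps. If such an interior fold arc $\delta$ satisfies $tord(\delta,\beta)>width(T_i)$, a path can switch sheets by travelling only as far as $P^{-1}(\delta)$, and your lower bound $d_{inner}(\tilde\beta_1(\tau),\tilde\beta_2(\tau))\gtrsim\tau^{w}$ fails for the pair of lifts you chose. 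This is precisely the step the paper spends most of its proof on: it decomposes $T_i=\cup_s T_{i,s}$ into subtriangles whose interiors miss the discriminant, invokes the valuation (isosceles) property of the order of contact to select an $s$ with $width(T_{i,s})=width(T_i)$, and runs the fold-crossing estimate inside that $T_{i,s}$; since an admissible arc and pair of lifts for $T_{i,s}$ is also admissible for $T_i$, the resulting bound $tord(\tilde\beta_1,\tilde\beta_2)\le width(T_i)$ still forces $height(T_i)\le width(T_i)$. With that reduction inserted, your argument goes through; without it, the claimed covering structure is unjustified and the estimate can fail.
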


\begin{proof}

Let us make a Fukuda-like reparametrization of our projection map $g:X \rightarrow \R^2$, $g(x)=\|x\|\frac{P(x)}{\|P(x)\|}$. {Then} the polar and the discriminant set of {$P$} is the same as the polar and the discriminant {set} of $g$. Suppose that $width(T_i)<height(T_i)$. The set $T_i$ can be divided into subsets $T_i= \cup T_{i,s}$, such that for all $s$ one has that $\#\{P^{-1}(v), v \in T_{i,s}\}$ is constant and there {are} no discriminant curves belonging to $int(T_{i,s})$, {that is}, $T_{i,s} \cap \Delta =\partial T_{i,s}$, {where $\Delta$ is the discriminant set}. By the valuation (isosceles) property, there exists $s$ such that 
\begin{center}
$ord_t\|\gamma_{1,s}(t)-\gamma_{2,s}(t)\|=ord_t\|\gamma_1(t)-\gamma_2(t)\|$,
\end{center}
where $\gamma_{1,s},\gamma_{2,s}$ are boundary curves of $T_{i,s}$. Take an arc $\gamma \subset T_{i,s}$ such that 
\begin{center}
$ord_t\|\gamma(t)- \gamma_{1,s}(t)\|=ord_t\|\gamma(t)-\gamma_{2,s}(t)\|=ord_t|\gamma_1(t)-\gamma_2(t)\|=width(T_i)$.
\end{center}
{By the cone structure theorem (see \cite{MJB}), there exists $\epsilon>0$ such that $X\cap B_\epsilon(x_0)$} can be represented in polar coordinates $(\theta,t)$, where $\theta \in \mathbb{S}^1$ and $t \in (0,\epsilon)$, {and} $\mathbb{S}^1$ {represents the} link of the surface $X$ {at $x_0$}. For each $t$ there are at least {a pair} of segments $[\theta_1(t),\theta_2(t)]$ and $[\theta_3(t),\theta_4(t)]$ in $\mathbb{S}^1$ such that 
\begin{itemize}
\item $g([\theta_1(t),\theta_2(t)]) \subset T_{i,s} \  \ 
       g([\theta_3(t),\theta_4(t)]) \subset T_{i,s}$
 \item $g(\theta_j(t))\subset \partial T_{i,s}$ for all $j=1,2,3,4$
 \item $g(\theta_1(t)) \neq g(\theta_2(t)), \ \  g(\theta_3(t)) \neq g(\theta_4(t))$.
\end{itemize}
We can choose one of these segments realizing $width(T_i)$ (say $[\theta_1(t),\theta_2(t)]$). Hence on each segment $[\theta_1(t),\theta_2(t)]$, $[\theta_3(t),\theta_4(t)]$ there exists a point belonging to an inverse image of $\gamma$. Consider two components $\tilde{\gamma_1},\tilde{\gamma_2}$ of $g^{-1}(\gamma)$ belonging two different H\"older triangles $\cup_{t}[\theta_1(t),\theta_2(t)]$ and $\cup_{t}[\theta_3(t),\theta_4(t)]$ . Then 
$$ord(d_{inn}( \tilde{\gamma_1}(t),\tilde{\gamma_2}(t))) \leq ord\| \theta_1(t)-\tilde{\gamma_1}(t)\| \leq width(T_{i,s}).$$
But, $ord\| \tilde{\gamma_1}(t)-\tilde{\gamma_2}(t)\| \geq height(T_{i,s}) \geq height(T_i)>width(T_i)=width(T_{i,s})$. {This implies that} $X$ cannot be normally embedded.
\end{proof}

\section{Normally embedded and corank 1 singularities}
In \cite{MJB}, the authors show that for maps $F: (\mathbb{R}^2,0)\rightarrow (\mathbb{R}^4,0)$ such that $corank(F)=1$, there are four orbits {in the 2-jet space with respect to the left-right action. That is, after 2-jets of diffeomorphisms in the source and target, we have that $J^2(F)$ is equivalent to}
$$
(x,y^2,xy,0),\quad (x,y^2,0,0),\quad (x,xy,0,0),\quad (x,0,0,0).
$$
By theorem 3.3, if $J^2(F)=(x,y^2,xy,0)$ or $J^2(F)=(x,y^2,0,0)$ then $(X,0)=F(\mathbb{R}^2,0)$ is not normally embedded. 

\medskip

{We recall that if $(X,0)$ has isolated singularity, then}  for small $\epsilon_0>0$, the link $X\cap \mathbb{S}^3(0,\epsilon)$ determines a knot. Moreover, $\{X\cap \mathbb{S}^3(0,\epsilon); 0<\epsilon\leq \epsilon_0\}$ are all isotopic by the conic structure (\cite{MJB}, theorem 2.5). We say that $X$ is topologically locally flat at $0$ (or that $0$ is non-singular point) when $X\cap \mathbb{S}^3(0,\epsilon)$ is a trivial knot. Actually, we have $X$ is locally flat in the sense of Fox and Milnor (see \cite{FM}). In this direction, we have the following result:
\begin{theorem}
\emph{Let $(X,0)=F(\mathbb{R}^2,0)$ be surface with isolated singularity, where $J^2(F)=(x,xy,0,0)$. If $X$ is normally embedded at $0$, then $X$ is locally flat at $0$.}
\begin{proof}
We need some preliminary comments. An orthogonal projection $P:\mathbb{R}^4 \rightarrow \mathbb{R}^3$ is called stable with respect to $X$ if all the singularities of $P(X)$ are {transverse} double points (In the sense of \cite{WD}). Notice that the property of being stable is a generic property. It means that the set of kernels of the non-stable projections is a subanalytic set of codimension bigger or equal to $1$ in $\mathbb{RP}^3$. We need the following lemma:
\begin{lemma}
\emph{Let $(X,0)=F(\mathbb{R}^2,0) \subset (\mathbb{R}^4,0)$ be a germ of  parametrized surface, where $F$ has corank $1$ in $0$. Then, there exists a linear change of variables such that the projection
\begin{center}
$P(z_1,z_2,z_3,z_4)=(z_1,z_2,z_3,0)$
\end{center}
is a stable projection with respect to $X-\{0\}$ and the image of the tangent cone of $X$ at zero by this projection is the tangent cone to $P(X)$. Moreover, the coordinate functions satisfy the conditions} (1).
\end{lemma}
\begin{proof}

Choose a direction of a projection in $\mathbb{RP}^3$, such that this direction is transversal to the tangent cone (plane) of $X$ in $0$ and such that the projection is stable. We may  choose the coordinates $z_1,z_2,z_3,z_4$, such that the tangent cone {is the plane} $(z_1,z_2,0,0)$. Then, {$F$ may be written in these coordinates as follows:}
\begin{center}
$(x,F_2(x,y),F_3(x,y),F_4(x,y))$,
\end{center}
where $ord_y(F_2(0,y)) < ord_y(F_3(0,y))$ and $ord_y(F_2(0,y)) < ord_y(F_4(0,y))$. Moreover, $F(x,0)=(x,0,0,0)$. Notice, that this change of coordinates {does} not destroy the stability of the projection.
\end{proof}

Proof of theorem: 

Let $X$ be normally embedded set at $0$, by the previous corollary, we have $T_0X$ is a plane {and} $ord_yF(0,y)$ is odd. {Moreover,} $F$ may be written as follows:

\begin{center}
$(x,xy+P_1(x,y),Q(x,y), R(x,y))$,
\end{center}
where $P_1, Q, R$ are elements of the ideal $\mathcal{M}_2^3=(x^3,x^2y,xy^2,y^3)$.

\medskip

Claim 1. {Since} $X$ is normally embedded in $0$, then $ord_yF(0,y)=ord_yQ(0,y)$ or $ord_yF(0,y)=ord_yR(0,y)$.

\medskip
\begin{proof}
Suppose {it is not true}. It {means} that, {up to} a linear change of coordinates, $ord_yF(0,y)=ord_yP(0,y)<ord_yQ(0,y)<ord_yR(0,y)$.

We define the polar set of the surface $X$ with respect to the parametrization $F$ as follows: Consider the map $P_T \circ F: \mathbb{R}^2 \rightarrow \mathbb{R}^2$, where $F$ is the parametrization and $P_T$ is the orthogonal projection of $X$ to $T_0X$, where  $P_T(z_1,z_2,z_3,z_4)=(z_1,z_2,0,0)$, by  lemma 5.2. Let $\Sigma$ be the singular locus of the map $P_T \circ F=(x,xy+P_1)$, {that is,}
\begin{center}
$\Sigma=\{(x,y) \in \mathcal{U}; x+\frac{\partial P_1(x,y)}{\partial y}=0\}$.
\end{center}
{Then} $\Sigma$ is a smooth curve (has only 1 branch) and admits {the} parametrization  ${\sigma(t)}=(ct^{n-1}+o(t^{n-1}),t)$, $n=ordF(0,y), \ c \in \mathbb{R}$. Observe that the discriminant of the projection $P_T:X\rightarrow T_0X$ is given by
\begin{center}
$\Delta=P_T\circ F(\sigma(t))=P_T \circ F(ct^{n-1}+o(t^{n-1}),t)=(Ct^{n-1}+o(t^{n-1}),dt^n+o(t^n))$.
\end{center}
{Thus,} $\Delta$ is a real cusp, $0 \in \Delta \subset T_0X$. Consider $T_{\Delta}$ be a H\"older triangle such that the boundary of $T_{\Delta}$ is $\Delta$. Observe that $width(T_{\Delta})=\frac{n}{n-1}$. 

\medskip

Notice that $height(T_\Delta)$ is well defined.
In fact, without loss of generality, we can suppose that $l_1=\{(x,0); x \geq 0\}$ is contained in $T_{\Delta}$. {Then} $l_1$ has two inverse images: $(x,0,0,0)$ and $\gamma_x=F(\tilde{\gamma_x})$, where $\tilde{\gamma_x}\neq (x,0)$ is {in the} zero locus of $\frac{xy+P_1(x,y)}{y}$.

\medskip
Now, it is sufficient to prove that $height(T_\Delta)>width(T_\Delta)$. 

\medskip

{To see} this, we need to estimate $height(T_{\Delta})$. Consider semialgebraic arcs $\beta_1,\beta_2$ such that $P_T(\beta_1)=P_T(\beta_2)=\beta \subset T_\Delta$ and $\beta_1,\beta_2$ realize $height(T_{\Delta})$, i.e., $height(T_{\Delta})=tord(\beta_1,\beta_2)$. Let $\tilde \beta_1$ be a pull back of $\beta_1$ and $\tilde \beta_2$ be a pull back of $\beta_2$, i.e., $F(\tilde{\beta}_1)=\beta_1$ and $F(\tilde{\beta}_2)=\beta_2$. Since $F(x,y)=(x,P,Q,R)$, we can suppose that $\beta, \beta_1, \beta_2$ are parametrized by the $x$-axis. Then, $tord(\beta_1,\beta_2)$ can be calculated as follows:
$$
ord_t\|F\circ \tilde{\beta}_1(t)-F \circ \tilde{\beta}_2(t)\|=ord_t\|(P_T \circ F(\tilde{\beta}_1),Q(\tilde{\beta}_1),R(\tilde{\beta}_1))-(P_T \circ F(\tilde{\beta}_2),Q(\tilde{\beta}_2),R(\tilde{\beta}_2))\|.
$$

It follows that:
$$
tord(\beta_1,\beta_2)=\|Q(\tilde{\beta}_1)-Q(\tilde{\beta}_2), R(\tilde{\beta}_1)-R(\tilde{\beta}_2)\|.
$$
In {other words}, we have $tord(\beta_1,\beta_2)=$ min$\{ord_t\|Q(\tilde{\beta}_1)-Q(\tilde{\beta}_2)\|, ord_t\|R(\tilde{\beta}_1)-R(\tilde{\beta}_2)\|\}$.

\medskip

Without loss of generality, we assume that $tord(\beta_1,\beta_2)=\|Q(\tilde{\beta}_1)-Q(\tilde{\beta}_2)\|$. We have the exponents $\alpha_1, \alpha_2$ given by 
$$
\alpha_i=tord(\beta_i,\beta)=ord_t\|(P_T \circ F(\tilde{\beta}_i),Q(\tilde{\beta}_i),R(\tilde{\beta}_i))-P_T \circ F(\tilde{\beta}_i)\|=ord_t\|(Q(\tilde{\beta}_i),R(\tilde{\beta}_i))\|,
$$
$i=1,2$. By the non archimedian property (see \cite{BF}), $tord(\beta_1,\beta_2)\geq tord(\beta_i,\beta)$, for some $i$, say, $i=1$. Then,
$$
ord_t\|Q(\tilde{\beta}_1)-Q(\tilde{\beta}_2)\|\geq ord_t\|(Q(\tilde{\beta}_1),R(\tilde{\beta}_1))\|.
$$

{We have that} $\tilde{\beta}_1$ is contained in $T(\Sigma)$ (the H\"older triangle with boundary $\Sigma$), where $P_T(T(\Sigma))=T_\Delta$. 

If $\tilde{\beta}_1$ is not tangent to the $y$-axis, we have a parametrization given by $\tilde{\beta}_1(t)=(et,t^{\tilde{\alpha}_i}+o(t^{\tilde{\alpha}_i}))$, $\tilde{\alpha}_i \geq 1$. In this case, $\beta_i$ and $\beta$ are already parametrized by the tangent vector $(1,0,0,0)$. Then, since $Q, R \in \mathcal{M}_2^3$, we have $tord(\beta_1,\beta_2)>ord_t\|(Q(\tilde{\beta}_1(t)),R(\tilde{\beta}_1(t)))\|>3>\frac{n}{n-1}$.

\medskip

\item If $\tilde{\beta}_1$ is tangent to $y$-axes, we have a parametrization $\tilde{\beta}_1(t)=(ct^{\tilde{\alpha}}+o(t^{\tilde{\alpha}}),t)$ with $\tilde{\beta}_1 \subset T(\Sigma)$. Then, $1\leq \tilde{\alpha}\leq n-1$ and $\beta_1(t)=(ct^{\tilde{\alpha}}+o(t^{\tilde{\alpha}}),P(\tilde{\beta}_1(t)), Q(\tilde{\beta}_1(t)),R(\tilde{\beta}_1(t)))$. Take the reparametrization
$$
\overline{\beta}_1(t)=\beta_1(t^{\frac{1}{\tilde{\alpha}}})=(ct+o(t),P(\tilde{\beta}_1(t^{\frac{1}{\tilde{\alpha}}}), Q(\tilde{\beta}_1(t^{\frac{1}{\tilde{\alpha}}}), R(t^{\frac{1}{\tilde{\alpha}}})).
$$
Notice that $ord_t\|(Q(\tilde{\beta}_1),R(\tilde{\beta}_1))\| \geq \min\{ \frac{ordQ(0,t)}{\tilde{\alpha}},\frac{2\tilde{\alpha}+1}{\tilde{\alpha}},\frac{2+\tilde{\alpha}}{\tilde{\alpha}}\} > \frac{n}{n-1}=width(T_\Delta)$. In fact, 
\begin{itemize}
\item $ordQ_y(0,y)>ord_yP(0,y) \Rightarrow \frac{ordQ(0,t)}{\tilde{\alpha}}> \frac{n}{n-1}$;

\item $\frac{2\tilde{\alpha}+1}{\tilde{\alpha}}=\frac{1}{\tilde{\alpha}}+2\geq 2+\frac{1}{n-1}> \frac{n}{n-1}$;
\item $\frac{2+\tilde{\alpha}}{\tilde{\alpha}}=\frac{2}{\tilde{\alpha}}+1 \geq 1+\frac{2}{n-1}>\frac{n}{n-1}$;
\end{itemize}
Thus, we conclude that $height(T_\Delta)>width(T_\Delta)$ and by theorem 4.2, $X$ is not normally embedded at $0$.
\end{proof}

End of the proof of the theorem: 

\medskip

Let $q=ord_yQ(0,y)$ and $p=ord_yP(0,y)$. {After a} linear change of coordinate, we can {assume} $ordF_y(0,y)=ord_yQ(0,y)<ord_yP(0,y)<ord_yR(0,y)$. In this case, consider $P_3:\mathbb{R}^4 \rightarrow \mathbb{R}^3$, $P_3(z_1,z_2,z_3,z_4)=(z_1,z_2,z_3,0)$ being a generic projection as in lemma \ref{Lema}. The set $P_3(X)$ provides the information on generic diagram of the knot $X \cap \mathbb{S}^3(0,\epsilon)$, when we consider $P_3(X) \cap \mathbb{S}^2(0,\epsilon)$, $\epsilon$ sufficiently small. The transverse double points of the diagram of the knot with respect to this projection can be obtained from the equations (see \cite{WJ}, \cite{WD}):

$$
\Delta P(x,y,u)=\frac{P(x,u)-P(x,y)}{u-y}=\Delta Q(x,y,u)=\frac{Q(x,u)-Q(x,y)}{u-y}=0 \Leftrightarrow
$$
$$
x+a(y^{p-1}+\ldots+u^{p-1})+\ldots =0
$$
 and $$
a_1(x)+a_2(x)(y+u)+a_3(x)(y^2+uy+u^2)+\ldots+b(y^{q-1}+y^{q-2}u+\ldots+yu^{q-2}+u^{q-1})+\ldots=0,
$$
where
 $$
 Q(x,y)=a_1(x)y+a_2(x)y^2+\ldots+a_q(x)y^q+\ldots
 $$
 with $a_q(0)\neq 0$.
{We are interested in the initial terms of this equation. To compute this, we set $H_{q-1}(y,u)=y^{q-1}+y^{q-2}u+\ldots+yu^{q-2}+u^{q-1}$. Then, we have $x=-aH_{p-1}(y,u)$
and $$
\Delta Q(-aH_{p-1}(y,u),y,u)=a_1(-aH_{p-1}(y,u))+a_2(-aH_{p-1}(y,u))(y+u)+\ldots+bH_{q-1}(y,u)+\ldots=0.
$$ }
Since $q=ord_yQ(0,y)<ord_yP(0,y)=p$, the initial part of this equation is 
\begin{center}
$H_{q-1}(y,u)=y^{q-1}+y^{q-2}u+\ldots+yu^{q-2}+u^{q-1}=0$.
\end{center}
{But} $q-1$ is even, {so} this equation {does} not have real solutions $\neq (0,0,0)$. Hence, the projection $P_3(X)$ {does} not have double points and, thus, $X\cap \mathbb{S}^3(0,\epsilon)$ is a trivial knot.

\end{proof}
\end{theorem}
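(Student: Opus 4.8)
The plan is to reduce triviality of the link to counting crossings in a generic planar diagram of the knot, and to show that, once $F$ is suitably normalised, the parity of its lowest $y$-order leaves no crossings at all. First I would normalise $F$: since $X$ is normally embedded, the corank-$1$ corollary (Corollary 3.4) forbids $ord_yF(0,y)$ from being even, so $n:=ord_yF(0,y)$ is odd and, by Theorem 3.1, the tangent cone $T_0X$ is a genuine $2$-plane rather than a half-plane. Applying Lemma 5.2 I may assume this plane is $\{z_3=z_4=0\}$, that $F=(x,\,P,\,Q,\,R)$ with $P=xy+P_1$ and $P_1,Q,R\in\mathcal{M}_2^3=(x^3,x^2y,xy^2,y^3)$, and that the projection $P_3(z_1,z_2,z_3,z_4)=(z_1,z_2,z_3,0)$ is stable with respect to $X\setminus\{0\}$.

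The first substantial step is to prove that the minimal $y$-order at $x=0$ is attained by one of the normal coordinates, i.e.\ $ord_yF(0,y)=ord_yQ(0,y)$ or $ord_yF(0,y)=ord_yR(0,y)$. I would argue by contradiction: if instead, after a linear change of coordinates, $n=ord_yP(0,y)<ord_yQ(0,y)<ord_yR(0,y)$, then I would compute the polar locus $\Sigma$ of the projection $P_T\circ F=(x,\,xy+P_1)$ onto the tangent plane; it is a single smooth branch $\sigma(t)=(ct^{n-1}+o(t^{n-1}),t)$, its image is a real cusp $\Delta$, and the H\"older triangle $T_\Delta$ bounded by $\Delta$ satisfies $width(T_\Delta)=\tfrac{n}{n-1}$. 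Bounding $height(T_\Delta)$ from below by applying the non-Archimedean (valuation) property to the pullbacks of arcs in $T_\Delta$, and using $Q,R\in\mathcal{M}_2^3$ to control $ord_t\|(Q,R)\|$ along these pullbacks, I would obtain $height(T_\Delta)>\tfrac{n}{n-1}=width(T_\Delta)$, contradicting the height-width inequality (Theorem 4.2). Hence the claim holds.

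With the claim in hand I would fix coordinates so that $q:=ord_yQ(0,y)=n<ord_yP(0,y)=:p<ord_yR(0,y)$ and study the stable projection $P_3$, whose self-intersections are exactly the crossings of the diagram $P_3(X)\cap\mathbb{S}^2(0,\epsilon)$. The double-point locus is cut out by the vanishing of the divided differences $\Delta P(x,y,u)=\frac{P(x,u)-P(x,y)}{u-y}$ and $\Delta Q(x,y,u)=\frac{Q(x,u)-Q(x,y)}{u-y}$. Solving $\Delta P=0$ for $x$ and substituting into $\Delta Q=0$, the strict inequality $q<p$ ensures that the leading term of the resulting equation is the symmetric form $y^{q-1}+y^{q-2}u+\cdots+yu^{q-2}+u^{q-1}$. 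Because $n=q$ is odd, $q-1$ is even and this form vanishes only at $(y,u)=(0,0)$; hence $P_3(X)$ has no transverse double points near $0$, the diagram is embedded, and the link $X\cap\mathbb{S}^3(0,\epsilon)$ is a trivial knot, i.e.\ $X$ is locally flat in the sense of Fox--Milnor.

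The step I expect to be the main obstacle is the lower bound $height(T_\Delta)>width(T_\Delta)$. It requires a case analysis on a pullback arc $\tilde\beta_1\subset T(\Sigma)$ according to whether it is tangent to the $y$-axis, a reparametrisation normalising it by arc length, and the verification that each of the competing exponents $\tfrac{ord_yQ(0,y)}{\tilde\alpha}$, $\tfrac{2\tilde\alpha+1}{\tilde\alpha}$ and $\tfrac{2+\tilde\alpha}{\tilde\alpha}$ exceeds $\tfrac{n}{n-1}$ for every admissible $1\le\tilde\alpha\le n-1$. A secondary point to watch is that, in the last step, the leading term of the substituted double-point equation is genuinely $y^{q-1}+\cdots+u^{q-1}$ and is not cancelled by higher-order contributions, which again rests on the strict gap $q<p$.
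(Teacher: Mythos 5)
Your proposal is correct and follows essentially the same route as the paper: the same normalisation via Lemma 5.2 and the parity corollary, the same proof of the key claim by contradiction with the height--width inequality applied to the cuspidal polar triangle $T_\Delta$, and the same final step of eliminating double points of the stable projection through the divided-difference equations and the parity of $q-1$. No substantive differences to report.
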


\end{document}